\title{ Local Groups in Delone Sets }
\author[N. Dolbilin]{Nikolay Dolbilin}
\address{Nikolay Dolbilin, \newline  Steklov Mathematical Institute, \newline Gubkina str.8, Moscow, Russia 119991 \newline dolbilin@mi-ras.ru}
\begin{document}

\newcommand{\R}{\ensuremath{\mathbb{R}}}

\newtheorem{theorem}{Theorem}[section]
\newtheorem{proposition}[theorem]{Proposition}
\newtheorem{corollary}[theorem]{Corollary}
\newtheorem{lemma}[theorem]{Lemma}
\newtheorem{problem}[theorem]{Problem}
\newtheorem{conjecture}{Conjecture}
\newtheorem{question}{Question}
\newtheorem{claim}{Claim}

\newtheorem{definition}[theorem]{Definition}

\theoremstyle{remark}
\newtheorem*{remark}{Remark}

\begin{abstract} In the paper, we prove  that in  an arbitrary Delone set $X$ in $3D$ space, the subset $X_6$ of all points from $X$ at which  local groups  has axes of the order not greater than 6 is  also a Delone set.  Here, under the \textit{local group at point} $x\in X$ is meant the symmetry group $S_x(2R)$ of the cluster $C_x(2R)$  of $x$ with radius $2R$, where $R$ (according to  Delone's theory of the 'empty sphere') is the radius of the largest 'empty' ball, that is, the largest ball free of points of $X$.
 
The main result (Theorem~\ref{thm:dn6})   seems to be  the  first  rigorously proved statement on   \textit{absolutely generic} Delone sets which implies substantial  statements for Delone sets with strong crystallographic  restrictions. For instance,   an important observation   of Shtogrin on the boundedness of local groups in Delone sets with  equivalent $2R$-clusters (Theorem~\ref{thm:stogrin}.) immediately follows from Theorem~\ref{thm:dn6}.    

In the paper, the  'crystalline kernel  conjecture' (Conjecture 1) and its two weaker versions (Conjectures 2 and 3) are suggested.    
According to Conjecture 1,  in a quite arbitrary  Delone set, points with locally crystallographic axes (of order 2,3,4, or 6) only \textit{inevitably} constitute essential part of the set. These conjectures significantly generalize the famous statement  of Crystallography  on the  impossibility of (global) 5-fold symmetry in a 3D lattice.

 \end{abstract}

\maketitle


\section{Introduction and basic definitions}
\label{intro}

This paper  grew out  of the local theory for regular systems, i.e for Delone sets with  very strong requirements. On the other hand,  here we consider an arbitrary  Delone sets in $\mathbb R^3$ without any additional assumptions. For example,  for  a Delone set,   we do not suppose a  typical condition of the local theory such as the  sameness of clusters of  certain radius as we did it in numerous papers (see e.g.,  \cite{local}, \cite{Dol2018}, \cite{DGLS}).
Another feature of  the paper is as follows,  for a Delone set, we consider  local groups operated over  clusters of  radius, namely  $2R$ (for definitions see below). 

In \cite{mackay1986},  A.L.~Mackay says: "In an infinite  crystal there  may be extra elements of symmetry which operate over a limited range. These may be seen by non-space-group extinctions in diffraction pattern. \ldots The local operations need not to be 'crystallographic'."  Nevertheless,  in this paper,  we show     that the last statement for   regular systems (i.e. for sets with transitive groups) should be significantly revised.    If we fix for a Delone sets  in 3D space, of type $(r,R)$   the range of action of local  groups as $2R$ (for details see below),   then, it turns out,  one can obtain interesting results on properties of such groups.
To accurately  formulate the results and open  hypotheses we will need 
 several  definitions and notations. 
 
 Euclidean distance  between points $x$ and $x'$ in euclidean space $\mathbb R^d$ is denoted by  $|x,x'|$. Let  $d(z, X)$ denote distance from $z\in \mathbb R^d$ to set $X\subset \mathbb R^d$, i.e. $d(z, X):=\inf_{x\in X}  |z, x|$].  

\begin{definition}
\label{def:deloneset}[Delone set]
Given  positive real numbers $r$ and $R$, a point subset $X$ of $\mathbb{R}^d$ is called a \emph{Delone set of type $(r,R)$} if the following two conditions hold:
\\
(1) an open $d$-ball $B^o_z(r)$ of radius $r$ centered at any point $z$ of space  contains at most one point of $X$;
\newline
(2) a  closed $d$-ball $B_z(R)$ of radius $R$ centered at an arbitrary point $z$ of space contains at least one point of $X$.
\end{definition}

It is clear that  a Delone set  $X$ of type $(r,R)$ is obviously a Delone set of type $(r',R')$ if $r'\leq r$ and  $R'\geq R$. Therefore,  we can adopt the convention in designating  the parameters $(r,R)$ to a Delone set $X$ as follows. For a given Delone set $X$,  we choose as $r$ the largest possible value satisfying  Condition (1) of Definition~\ref{def:deloneset},  and choose as $R$ the smallest  value satisfying  Condition (2). 

We will need also the following interpretations of the parameters $r$ and $R$:
 \[\inf_{x,x'\in X}|xx'|=2r, \qquad  \sup_{z\in \mathbb R^d}d(z, X)=R. \eqno(1)\]
 
 Thus, the value of $r$ equals  the  half of the smallest (infimum) inter-point distance in $X$. The value of $R$ is  a  distance from the most remote from $X$ point of space $\mathbb R^d$ to the set  $X$.
 
   In the local theory of Delone sets, the key concepts are that of a cluster.

\begin{definition}[$\rho$-cluster]\label{def:cluster}
	
  Let  $x$ be a point of a Delone set $X$ of type $(r,R)$,  $\rho\geq 0$, and let  $B_z(\rho)$ be a ball with radius $\rho$ centered at point $z\in \mathbb R^d$.   We call a point set  
\[C_{\mathbf x}(\rho):=X\cap B_{\mathbf x}(\rho)\] 
the {\em cluster of radius $\rho$} at point $x$  or simply the {\em $\rho$-cluster  at $x$}.
\end{definition}

\begin{definition}[equivalent clusters] \label{def:clustequiv}
	Two clusters $C_x(\rho)$ and $C_{x'}(\rho)$ of the same radius $\rho$ at points $x$ and $x'$ are said to be \textit{equivalent} if there is an isometry $g\in Iso(\mathbb R^3)$ such that 
	\[ g(x)=x' \quad {\rm and} \quad  g(C_x(\rho))=C_{x'}(\rho).\eqno(2)\]    
\end{definition}

\begin{definition}[cluster group]\label{def:clustergroup}\label{def:clustergroup}
Given a point $x\in X$ and its $\rho$-cluster $C_x(\rho)$, a group $S_x(\rho)$ of all isometries $s\in Iso(\mathbb R^d)$ which leave the  $x$ fixed and the cluster $C_x(\rho)$ invariant  is called the \textit{cluster group}:
\[S_x(\rho):=\{s\in  Iso (\mathbb R^d)\,|\, s(x)=x, \, s(C_x(\rho ))=C_x( \rho)\}.\] 
\end{definition} 

Groups of equivalent clusters  $C_x(\rho)$ and  $C_{x'}(\rho)$ are conjugate in the full group $Iso(d)$ of isometries: 
$S_x(\rho)=g^{-1}\, S_{x'}(\rho)\,  g$, 
where   $g$  is determined by conditions of Definition~\ref{def:clustequiv} . It is clear that as the radius $\rho$ increases, the cluster $C_x(\rho )$ expands but  the cluster group $S_x(\rho)$ never increases and sometimes can    contract only.   It is clear that if $0\leq \rho< 2r$ group $C_x(\rho)= O_x(3)$? i.e. the full point group of all isometries that leave point $x$ fixed. On the other and, it is well-know that the  $S_x(2R)$

   Since the main result grew up ideologically from  the local theory of regular systems, here, we briefly recall basic concepts   of this theory.
   Modern in form, the following definitions of regular system and crystal are equivalent to those that go back to E.S.~Fedorov. 
   
   \begin{definition}[regular system, crystal]  A Delone set $X$ is called a {\it regular system}  if it is an orbit of some point $x$ with respect to a certain space group $G\subset Iso(d)$,    i.e.
   \[X=G\cdot x=\{g(x)\,|\,g\in G\}; \]  
   a Delone set $X$ is a {\it crystal}  if $X$ is a union of several orbits: $X=\cup_{i=1}^m G\cdot x_i$.
   \end{definition}

We emphasize that the notion of a regular system is an essential case of the  crystal, i.e. a multi-regular system, and generalizes the lattice concept. In fact, a lattice is a a particular case of a regular system when $G$ is a group of translations  generated by $d$ linearly independent translations. Moreover, due to a celebrated theorem by Schoenflies and Bieberbach, any regular system is the union of congruent and mutually parallel lattices.          

The local theory of regular systems  began with the Local criterion in \cite{local}.

\begin{theorem}[Local Criterion, \cite{local}] A Delone set $X$ is a regular system if and only if there is some $\rho_0>0$ such that the following two conditions hold: 
\newline 1) all $\rho_0+2R$-clusters are mutually equivalent;
\newline 2) $S_x(\rho_0)= S_x(\rho_0+2R)$ for $x\in X$. 
	
\end{theorem}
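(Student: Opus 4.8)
The plan is to prove the two implications separately; the forward implication (a regular system satisfies the two conditions) is routine, while the converse is the substantial part and rests on a single propagation argument.

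For the forward direction, suppose $X=G\cdot x_0$ for a space group $G$. Given $x,x'\in X$, pick $g\in G$ with $g(x)=x'$; since $g(X)=X$ one gets $g\bigl(C_x(\rho)\bigr)=g(X)\cap B_{x'}(\rho)=C_{x'}(\rho)$, so in fact \emph{all} $\rho$-clusters are equivalent for \emph{every} $\rho$, and Condition~1 holds for any choice of $\rho_0$. For Condition~2, note the groups $S_{x_0}(\rho)$ form a non-increasing family of finite subgroups of $O_{x_0}(d)$ as $\rho$ grows; since $S_{x_0}(0)$ is finite, only finitely many radii can be ``drop points'' at which some element ceases to be a symmetry, so the family is constant beyond some finite $\rho^{\ast}$. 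Conjugating by elements of $G$ transports this to every $x\in X$, and any $\rho_0\ge\rho^{\ast}$ then satisfies $S_x(\rho_0)=S_x(\rho_0+2R)$ for all $x$.

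For the converse, assume Conditions 1 and 2 hold for some $\rho_0>0$. First I would record the standard fact that the graph on $X$ with $x\sim x'$ when $|x,x'|\le 2R$ is connected: if a connected component $X_p$ were a proper subset, one could take $z$ on the boundary of the closed set $\{w:\,d(w,X_p)\le d(w,X\setminus X_p)\}$, note $B_z(R)$ meets $X$, and obtain points $a\in X_p$, $b\in X\setminus X_p$ with $|z,a|=|z,b|\le R$, so $0<|a,b|\le 2R$ — contradicting that $X_p$ is a component. The core of the proof is then a \emph{propagation lemma}. Write $A_{x,y}$ for the set of isometries $g$ with $g(x)=y$ and $g\bigl(C_x(\rho_0+2R)\bigr)=C_y(\rho_0+2R)$. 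I claim: if $g\in A_{x,y}$ and $x'\in X$ with $|x,x'|\le 2R$, then $g(x')\in X$ and $g\in A_{x',g(x')}$. Indeed $x'\in C_x(\rho_0+2R)$, so $g(x')\in C_y(\rho_0+2R)\subseteq X$; since $B_{x'}(\rho_0)\subseteq B_x(\rho_0+2R)$ one has $C_{x'}(\rho_0)=C_x(\rho_0+2R)\cap B_{x'}(\rho_0)$, and applying the isometry $g$ gives $g\bigl(C_{x'}(\rho_0)\bigr)=C_y(\rho_0+2R)\cap B_{g(x')}(\rho_0)=C_{g(x')}(\rho_0)$ (using $|y,g(x')|\le 2R$ for the last step). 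By Condition~1 choose $h$ with $h(x')=g(x')$ and $h\bigl(C_{x'}(\rho_0+2R)\bigr)=C_{g(x')}(\rho_0+2R)$; then $h^{-1}g$ fixes $x'$ and maps $C_{x'}(\rho_0)$ to itself, so $h^{-1}g\in S_{x'}(\rho_0)$, which by Condition~2 equals $S_{x'}(\rho_0+2R)$. Hence $g=h\,(h^{-1}g)$ carries $C_{x'}(\rho_0+2R)$ onto $C_{g(x')}(\rho_0+2R)$, i.e.\ $g\in A_{x',g(x')}$.

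Granting the lemma, the proof finishes quickly. For any $y\in X$ Condition~1 gives some $g\in A_{x_0,y}$; propagating the lemma along a $2R$-path in $X$ from $x_0$ to an arbitrary point $z$ yields $g\in A_{z,g(z)}$, in particular $g(z)\in X$, so $g(X)\subseteq X$, and applying the same to $g^{-1}\in A_{y,x_0}$ gives $g(X)=X$. Thus $\mathrm{Sym}(X)$ acts transitively on $X$. It remains to note that $\mathrm{Sym}(X)$ is a space group: it is discrete because an isometry fixing $d$ affinely independent points of the uniformly discrete set $X$ is the identity, and it is cocompact because its transitive action on the $R$-net $X$ (together with finiteness of the point stabilizer $\mathrm{Stab}_{\mathrm{Sym}(X)}(x_0)\le O_{x_0}(d)$) forces a bounded fundamental domain. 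Hence $X=\mathrm{Sym}(X)\cdot x_0$ is a regular system. The hard part is the propagation lemma, and within it the decisive ingredient is Condition~2: equivalence of $(\rho_0+2R)$-clusters alone lets one transport the reference cluster only once, and it is the non-shrinking of the cluster group over the extra range $2R$ that permits re-centering the transporting isometry at a neighbour and thereby iterating it across all of $X$.
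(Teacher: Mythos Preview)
The paper does \emph{not} prove this theorem; it is quoted as the known Local Criterion from \cite{local} and serves only as background for the local theory. So there is no ``paper's own proof'' to compare against.

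On its own merits, your argument is the classical one and is essentially correct: the heart of the converse is exactly the propagation lemma you state, and your proof of it (re-centering the transporting isometry at a $2R$-neighbour via Condition~1 and upgrading the resulting $\rho_0$-symmetry to a $(\rho_0+2R)$-symmetry via Condition~2) is clean and accurate. Two small slips are worth fixing. First, in the forward direction you write ``since $S_{x_0}(0)$ is finite''; this is false---for $0\le\rho<2r$ the cluster is $\{x_0\}$ and $S_{x_0}(\rho)=O_{x_0}(d)$. The stabilization argument is salvaged by starting at $\rho=2R$, where the cluster is full-dimensional and the group is finite; from there the non-increasing chain of finite groups must stabilize. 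Second, in the discreteness step you say an isometry fixing $d$ affinely independent points of $X$ is the identity; you need $d+1$ affinely independent points (with only $d$, a reflection in their affine span survives). Neither slip touches the substantive content, and with these two corrections the proof is complete and matches the standard argument behind the cited Local Criterion.
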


In \cite{DolSht1989}, \cite{DLS1998}, this criterion has been  generalized for crystals, i.e multi-regular systems.

From now on, we restrict ourselves only to the 3D case.  One of central problems of the local theory of regular systems is to search for an  upper (and lower) bound for the regularity radius,   i.e.  a minimum value   $\hat{\rho}_3>0 $ such  that  equivalence of ${\hat\rho}_3$-clusters in a Delone set $X$ implies the regularity of the set $X\subset \mathbb R^3$.  In \cite{Dol2018},\cite{DGLS} is given  a proof of the upper bound $\hat\rho_3\leq 10R$. The long proof starts with selection of a special finite   o of finite subgroups of $O(3)$. Groups of this list   have a chance to occur  in Delone sets with equivalent $2R$-clusters as local groups $S_x(2R)$. The list of  selected   groups  is provided by    Theorem~\ref{thm:stogrin} found by Shtogrin in the late 1970's but published only in 2010 (\cite{Sto2010}). 

 \begin{theorem}[\cite{Sto2010}]\label{thm:stogrin}
    If  in a Delone set $X\in \mathbb R^3$ all $2R$-clusters are mutually equivalent,  then  the  order of any rotational axis of $S_x(2R)$ does not exceed 6.    
   	   \end{theorem}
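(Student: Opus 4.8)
\smallskip
\noindent\textbf{Proof plan.}
The plan is to argue by contradiction, so suppose that for some $x\in X$ the group $S_x(2R)$ contains a rotation $g$ of order $n\ge 7$ about an axis $\ell\ni x$. The workhorse is an elementary \emph{regular-polygon estimate}: if $z\in X$ satisfies $|xz|\le 2R$ and $z\notin\ell$, then $z,gz,\dots,g^{n-1}z$ are $n$ distinct points of $C_x(2R)$ forming a regular $n$-gon of circumradius $d(z,\ell)$ in the plane through $z$ orthogonal to $\ell$; since consecutive vertices belong to $X$ their distance $2\,d(z,\ell)\sin(\pi/n)$ is at least $2r$, so $d(z,\ell)\ge r/\sin(\pi/n)$, and this exceeds $2r$ because $\sin(\pi/n)<\tfrac12$ for $n\ge 7$. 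Consequently any point of $X$ lying in $B_x(2r)$ must lie on $\ell$: that is, $X\cap B_x(2r)\subseteq\ell$.

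\smallskip
Next I would use the hypothesis that all $2R$-clusters of $X$ are mutually equivalent. Each $C_y(2R)$ is finite and isometric to $C_x(2R)$, so all of them realize the same finite set $D$ of pairwise distances; hence $\min D=2r$, and this minimum is attained by \emph{every} point of $X$ (there are no ``isolated'' points). For the same reason every $y\in X$ carries a rotation of order $n$ about some axis $\ell_y\ni y$, and this axis is \emph{unique}, since two distinct axes through $y$ would both have to contain the nearest neighbour of $y$ and therefore meet only at $y$. The polygon estimate applied at each $y$ gives $X\cap B_y(2r)\subseteq\ell_y$, from which one reads off the structure of the graph on $X$ joining points at distance $\le 2r$: its connected components are arithmetic progressions of spacing $2r$ along straight lines, and the line carrying a component is precisely the common $n$-axis of all of its points. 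Thus $X$ is a disjoint union of collinear ``chains'', each lying on the $n$-axis of its points.

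\smallskip
Now bring in three-dimensionality. A subset of a line is not a Delone set of $\mathbb{R}^3$, while the ``$2R$-graph'' of any Delone set is connected; hence some cluster $C_x(2R)$ is not contained in $\ell_x$, i.e.\ there is $x'\in C_x(2R)$ with $x'\notin\ell_x$. Then $\ell_{x'}\ne\ell_x$, and $2r<r/\sin(\pi/n)\le d(x',\ell_x)\le|xx'|\le 2R$, so we have two distinct $n$-axes that are close but not too close. Either (a) all the chains of $X$ are mutually parallel, or (b) some chain has, within distance $2R$, a chain not parallel to it (these are complementary because the chain adjacency graph inherits connectedness from the $2R$-graph). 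In case (a), orthogonal projection onto the plane $\Pi$ orthogonal to the common direction sends $X$ onto a planar point set $\bar X$ with one point per chain; one checks that $\bar X$ is a Delone set of $\mathbb{R}^2$ and that for a suitable fixed radius $\rho_0$ (a short estimate, using the optimality of the triangular packing, lets one take $\rho_0\ge\sqrt{4R^2-r^2}$) the projected rotation, of order $n$, belongs to $S_P(\rho_0)$ for \emph{every} $P\in\bar X$. But in the plane the nearest neighbour of a point is always distinct from it, hence off the fixed point of any rotation about it, so the polygon estimate applied in $\mathbb{R}^2$ forces $\sin(\pi/n)\ge\tfrac12$, contradicting $n\ge 7$. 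Case (a) is thus impossible.

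\smallskip
The remaining case (b) is, I expect, the main obstacle: there is no honest planar cross-section to project onto, and --- crucially --- a local rotation such as $g\in S_x(2R)$ is \emph{not} a global symmetry of $X$, so it need not carry the axis $\ell_{x'}$ of a far-away point to another $n$-axis of $X$. My plan for (b) is to run a local version of the classical ``crystallographic restriction'' argument: transport one $n$-axis by the rotation about a neighbouring one to manufacture a third $n$-axis at distance $2d\sin(\pi/n)<d$ from the first, and iterate until two points of $X$ are forced to lie within $2r$ of each other, contradicting the choice of $r$; the argument must be arranged so that each newly produced point still lies inside a ball on which the rotations used are genuine cluster symmetries, and controlling this ``drift'' out of $B_x(2R)$ --- using the uniformity supplied by the equivalence of all $2R$-clusters --- is the technical heart. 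The various degeneracies (chains whose supporting lines happen to cross, which chain point to base the polygon estimate at, chains that share a line, and the half-turn $g^{n/2}$ when $n$ is even) are routine to dispose of once the dichotomy ``nearest neighbour on versus off the $n$-axis'' is in hand.
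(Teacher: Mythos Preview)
Your proposal has a genuine gap: case~(b) is not proved. You acknowledge this yourself (``controlling this `drift' out of $B_x(2R)$ \ldots\ is the technical heart''), and indeed the scheme ``rotate one $n$-axis about another to manufacture a closer one'' runs into exactly the difficulty you flag: the image $g_{x}(\ell_{x'})$ need not be an axis of any local group, because $g_x$ only acts on $C_x(2R)$, not on $X$. Without a mechanism guaranteeing that the new, closer point again carries an $n$-axis and again lies inside a ball where the next rotation is legitimate, the iteration cannot be continued, and your sketch does not supply that mechanism. Case~(a), the parallel-axes case, is also shakier than it looks: after projection, the image of $C_x(2R)$ is not the same thing as a $\rho_0$-cluster of $\bar X$, so the claim that the projected rotation lies in $S_P(\rho_0)$ for every $P$ needs a real argument.

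The paper sidesteps the entire parallel/non-parallel dichotomy by a single clean device: the \emph{off-axial chain}. Start at any $x_1$; let $x_2$ be the nearest point of $X$ off the axis $\ell_{x_1}$, so $r_1^*:=|x_1x_2|\le 2R$. Rotating $x_2$ about $\ell_{x_1}$ produces a regular $n_{x_1}$-gon $P_1\subset C_{x_1}(2R)$ with side $a_1\le 2r_1^*\sin(\pi/7)<0.87\,r_1^*$. The two $P_1$-neighbours of $x_2$, together with $x_2$, are non-collinear, so at least one of them lies off $\ell_{x_2}$; hence the nearest off-axis point $x_3$ from $x_2$ satisfies $r_2^*:=|x_2x_3|\le a_1<0.87\,r_1^*$. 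Crucially, $r_2^*\le 2R$, so $x_3\in C_{x_2}(2R)$ and the rotation about $\ell_{x_2}$ legitimately applies to $x_3$. Iterating, $r_i^*<0.87^{\,i-1}\cdot 2R$, which eventually drops below $2r$ --- a contradiction. Under the hypothesis of equivalent $2R$-clusters every $n_{x_i}\ge 7$, so the chain never terminates and the contradiction is reached.

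Note that this is essentially your case-(b) idea done right: instead of transporting \emph{axes} (which fails because local rotations are not global symmetries), one transports the single \emph{nearest off-axis point}, which automatically stays inside the relevant $2R$-cluster at each step and, by the equivalence hypothesis, is guaranteed to have its own order-$\ge 7$ axis. That one change removes the drift problem entirely and makes the parallel/non-parallel case split --- and with it your whole chain/projection apparatus --- unnecessary.
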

  
  Quite recently, \cite{Dol2019},  it was realized for the first time that   an  important statement  about  groups in Delone sets \textbf{with significant requirements on equivalent clusters} may follow from a certain  statement true for  \textbf{ pretty general} Delone sets.
  
   Namely, in \cite{Dol2019},  Theorem~\ref{thm:dn6-19}   is proved. 
    Given an arbitrary Delone set $X\subset \mathbb R^3$ and $x\in X$, let  the maximal order of  rotational axis in group $S_x(2R)$  be denoted by $n_x$. 
  
  \begin{theorem}\label{thm:dn6-19} In a Delone  set $X\subset \mathbb R^3$ there is at least one point $x$ with $n_x\leq 6$.  	
  \end{theorem}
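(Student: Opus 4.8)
The plan is to argue by contradiction: suppose every point $x\in X$ has $n_x\ge 7$, that is, every local group $S_x(2R)$ contains a rotation of order at least $7$, and derive a contradiction from Delone's empty-sphere method together with the classification of finite subgroups of $O(3)$.

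First I would invoke the empty-sphere construction. Since $\sup_{z\in\mathbb R^3}d(z,X)=R$ and $X$ is uniformly discrete, there is a \emph{deepest hole} $z$, i.e.\ a point with $d(z,X)=R$; its \emph{contact set} $Y:=\{y\in X:\ |y,z|=R\}\subset\partial B_z(R)$ is finite, and $z$ lies in the interior of $\conv Y$ (otherwise $z$ could be displaced slightly so as to have distance to $X$ exceeding $R$). Hence $\conv Y$ is three-dimensional, $|Y|\ge 4$, the set $Y$ affinely spans $\mathbb R^3$, and $\partial B_z(R)$ is the smallest sphere enclosing $Y$. The choice of radius $2R$ enters here: for every contact $x\in Y$ we have $B_z(R)\subseteq B_x(2R)$, so the cluster $C_x(2R)$ contains the entire empty ball. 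If $g\in S_x(2R)$, then $g$ fixes $x$, maps $C_x(2R)$ onto itself, and, since $|x,g(z)|=|x,z|=R$, also satisfies $B_{g(z)}(R)\subseteq B_x(2R)$; hence $\mathring B_{g(z)}(R)\cap X=\mathring B_{g(z)}(R)\cap C_x(2R)=g\bigl(\mathring B_z(R)\cap C_x(2R)\bigr)=g\bigl(\mathring B_z(R)\cap X\bigr)=\emptyset$. Thus $g(z)$ is again a deepest hole having $x$ among its contacts, and $g$ carries $Y$ bijectively onto the contact set of $g(z)$.

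Now apply this to a rotation $g_x\in S_x(2R)$ of some order $m\ge 7$ about an axis $\ell_x$ through $x$ (which exists because $n_x\ge 7$). Either $g_x$ fixes $z$ — which, $g_x$ being a nontrivial rotation, forces $z\in\ell_x$ and makes $g_x$ a symmetry of the polytope $\conv Y$ of order $m\ge 7$ about the line $\ell_x$ through $x$ and $z$ — or $z\notin\ell_x$, in which case the $g_x$-orbit of $z$ consists of $m\ge 7$ distinct deepest holes lying on a circle about $\ell_x$, each having $x$ among its contacts. Suppose the first alternative occurred for \emph{every} contact $x\in Y$. Because $Y$ affinely spans $\mathbb R^3$, the contacts do not all lie on one line through $z$, so there are $x,x'\in Y$ with $x,x',z$ not collinear, and then $\conv Y$ admits rotational symmetries of order $\ge 7$ about the two \emph{distinct} lines $xz$ and $x'z$. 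But the symmetry group of $\conv Y$ fixes $z$ (the centre of the smallest sphere enclosing $Y$), and, since it permutes the finite affinely spanning set $Y$, it is a finite subgroup of $O(3)$ — and such a group has at most one rotation axis of order exceeding $5$. This contradiction shows that for the deepest hole $z$ we began with there is a contact $v\in Y$ with $z\notin\ell_v$; equivalently, $v$ is a common contact point of $m_v\ge 7$ distinct deepest holes arranged with $m_v$-fold symmetry about $\ell_v$. Call such a point a \emph{hub}.

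The remaining, and in my view hardest, step is to rule out hubs. One can push the local picture a little further: if $v$ is a hub with deepest holes $z_1,\dots,z_{m_v}$ forming one $g_v$-orbit, then the Delaunay cells $\conv Y_i$ of the $z_i$ have pairwise disjoint interiors, all contain $v$, are permuted cyclically by $g_v$, and each carries an empty ball $B_{z_i}(R)$ of the \emph{maximal} radius $R$; and applying the dichotomy of the previous paragraph to any one $z_i$ shows that $z_i$ in turn possesses a hub among its contacts. I expect the contradiction to emerge from pursuing this systematically: either along a chain of deepest holes one is eventually forced into the first alternative with two non-collinear axes of order $\ge 7$ and hence the $O(3)$ obstruction, or a packing/volume estimate — exploiting that $X$ is $2r$-separated while $\ge 7$ empty balls of the maximal radius $R$ must pile up with $m_v$-fold symmetry around $v$ — yields a point of $\mathbb R^3$ farther than $R$ from $X$, contradicting $\sup_z d(z,X)=R$. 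Making this reconciliation of the forced local $\ge 7$-fold symmetry with the global empty-sphere bound precise is the heart of the matter.
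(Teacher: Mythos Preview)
Your proposal has a genuine gap, which you yourself flag: the ``hub'' case is never actually ruled out. You reduce to the situation where some contact point $v$ of a deepest hole has $z\notin\ell_v$, so that $\ge 7$ deepest holes sit around $v$ with $m_v$-fold symmetry, and then you only \emph{speculate} that either a chain-of-holes argument or a packing estimate should finish things off. Neither sketch is an argument: passing from $z$ to another deepest hole $z_i$ and finding a new hub $v_i$ gives no control relating $v_i$ to $v$, and no volume or packing inequality is stated, let alone proved. There is also a smaller technical gap at the very start: for a general Delone set the supremum $R=\sup_z d(z,X)$ need not be attained, so a ``deepest hole'' with $d(z,X)=R$ may fail to exist; you would at least need to work with almost-deepest holes and control the resulting errors.

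The paper's route is quite different and avoids deepest holes entirely. One builds an \emph{off-axial chain} $x_1,x_2,\ldots$ in $X$: given $x_i$ with $n_{x_i}\ge 7$, take $x_{i+1}$ to be the nearest point of $X$ off the axis $\ell_{x_i}$ (such a point exists within $2R$ since $C_{x_i}(2R)$ is $3$-dimensional). The order-$n_{x_i}$ rotation about $\ell_{x_i}$ carries $x_{i+1}$ to the vertices of a regular $n_{x_i}$-gon $P_i\subset X$ whose circumradius is at most $|x_i,x_{i+1}|$, hence whose side length satisfies $a_i\le 2|x_i,x_{i+1}|\sin(\pi/7)<0.87\,|x_i,x_{i+1}|$. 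Since $x_{i+1}$ and its two neighbours in $P_i$ are non-collinear, at least one neighbour lies off $\ell_{x_{i+1}}$, so $|x_{i+1},x_{i+2}|\le a_i<0.87\,|x_i,x_{i+1}|$. Iterating gives $|x_i,x_{i+1}|<0.87^{\,i-1}\cdot 2R$, which eventually drops below the packing bound $2r$ --- a contradiction. Thus some $x_m$ in the chain has $n_{x_m}\le 6$. This shrinking-distance mechanism is the missing idea; your deepest-hole setup, while correct as far as it goes, does not produce any comparable contraction.
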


It is obvious that Theorem~\ref{thm:dn6-19} immediately implies Theorem~\ref{thm:stogrin}  In fact, the subset $X_6$ of all points in a Delone set $X$  with $n_x\leq 6$ is always very rich. Due to Theorem~\ref{thm:dn6} the subset $X_5$  is a Delone set itself.

\section{The  main result and conjectures}

\begin{theorem}[Main result]\label{thm:dn6}
  Given a Delone set   $X\subset \mathbb R^3$  of type $(r,R)$,  let $ X_6\subseteq X$ be the  subset of all points $x\in X$such that  the maximal order $n_x$  of a  rotation axes in  $S_x(2R)$ does not exceed  $6$.  Then $X_6$ is  a Delone set of a certain type   $(r',R')$, where $r\leq r'\leq R'=kR$ for some $k$ independent on $X$.  	
\end{theorem}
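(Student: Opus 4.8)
\emph{Plan.} Condition~(1) of Definition~\ref{def:deloneset} is inherited for free: since $X_6\subseteq X$ and $X$ is of type $(r,R)$, every open $r$-ball meets $X$, hence $X_6$, in at most one point, so $r'=r$ works. The whole content is therefore a \emph{universal} covering bound --- a constant $k$, not depending on $X$, such that every closed ball of radius $kR$ meets $X_6$; this $k$ will be the claimed $R'/R$. The plan is to argue by contradiction: fix such a $k$ (pinned down at the end) and suppose $\mathbf B=B_w(kR)$ satisfies $\mathbf B\cap X_6=\varnothing$, i.e.\ $n_x\ge 7$ for every $x\in X\cap\mathbf B$. Since the only finite subgroups of $O(3)$ possessing a rotation of order $\ge 7$ are the axial groups $C_n,C_{nv},C_{nh},S_{2n},D_n,D_{nd},D_{nh}$ with $n\ge 7$, each having a unique axis of order $\ge 7$, every such $x$ carries a well-defined principal axis $\ell_x\ni x$ and a generating rotation $\sigma_x$ of order $n_x$ about it; as $\sigma_x$ fixes $x$ and preserves $C_x(2R)$, it preserves $X\cap B_x(t)$ for all $t\le 2R$.

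The geometric work would rest on two elementary observations. (i) \emph{Exclusion cylinder}: if $y\in C_x(2R)$ with $y\notin\ell_x$, the $\sigma_x$-orbit of $y$ is a regular $n_x$-gon lying in $X$, centred on $\ell_x$ with circumradius $\mathrm{dist}(y,\ell_x)$, so its side $2\,\mathrm{dist}(y,\ell_x)\sin(\pi/n_x)$ is at least $2r$, whence $\mathrm{dist}(y,\ell_x)\ge r/\sin(\pi/7)=:\rho_0$; thus inside $B_x(2R)$ the only points of $X$ within $\rho_0$ of $\ell_x$ lie on $\ell_x$. (ii) \emph{Empty-ball transport}: if $B_z(R)$ is a (necessarily maximal) empty ball with vertex set $V:=X\cap\partial B_z(R)$ and $v\in V$, then $B_z(R)\subseteq B_v(2R)$, so $\sigma_v$ carries $B_z(R)$ onto another empty ball $B_{\sigma_v(z)}(R)$ with vertex set $\sigma_v(V)$; and if in addition $z\in\ell_v$ then $\sigma_v$ fixes $z$, preserves $\partial B_z(R)$, and so permutes $V$.

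Next I would take a maximal empty ball $B_z(R)$ with centre $z$ near $w$; its vertex set $V$ lies in $\mathbf B$, and maximality forces $|V|\ge 4$ with $z$ in the interior of $\conv(V)$, so $n_v\ge 7$ for each $v\in V$. If $z\in\ell_v$ for every $v\in V$, then by~(ii) the finite set $V\subseteq X$ is invariant under the order-$\ge 7$ rotations $\sigma_v$ about the lines $zv$, not all of which coincide; but no finite subgroup of $O(3)$ contains two rotations of order $\ge 7$ about distinct axes, so $\langle\sigma_v:v\in V\rangle$ is an infinite subgroup of the compact rotation group about $z$, hence non-discrete, and since it carries $V$ onto itself it yields two points of $V$ arbitrarily close --- contradicting the $2r$-separation of $X$. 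If instead $z\notin\ell_v$ for some $v\in V$, I would apply transport~(ii): $\sigma_v$ produces $n_v\ge 7$ maximal empty balls $\sigma_v^{\,j}(B_z(R))$ with centres at the vertices of a regular $n_v$-gon inscribed in $\partial B_v(R)$, all of whose vertices remain in $\mathbf B$; iterating while using~(i) to keep the emerging axes $\rho_0$-separated, one is driven either into a bundle of mutually parallel high-order axes inside $\mathbf B$, and thence into the planar analogue of Theorem~\ref{thm:dn6-19}, or directly into overcrowding a bounded region with $\rho_0$-separated ``columns''. Either way $\mathbf B\cap X_6=\varnothing$ is impossible. As every point invoked lies within a fixed multiple of $R$ of $w$, the resulting $k$ is universal; in effect the argument \emph{localizes} Theorem~\ref{thm:dn6-19}, showing its contradiction already occurs inside an $O(R)$-ball.

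\emph{Main obstacle.} The hard part is the second alternative --- a vertex axis missing the hole centre --- \emph{handled with a constant independent of $R/r$}. A naive iteration of transport~(ii) drifts, and squeezing the auxiliary translations it generates down to scale $\rho_0$ takes on the order of $\log(R/r)$ steps, which would wreck universality of $k$. One therefore wants a non-iterative mechanism that, as in the first alternative, forces a finite $2r$-separated subset of $X$ (on a sphere, or inside a single $2R$-ball) to carry an incompatible, hence non-discrete, group of rotations of order $\ge 7$, producing the contradiction in one stroke. A second, related difficulty is the rigidity of nearby high-order axes: for $x,y$ close, $\sigma_x$ and $\sigma_y$ act only on the overlapping parts of $C_x(2R)$ and $C_y(2R)$ and a priori generate nothing jointly, so parallelism --- or coincidence, or a shared centre --- of $\ell_x$ and $\ell_y$ must be extracted from the cluster data at $O(R)$ scale, and that is where the bulk of the technical work lies.
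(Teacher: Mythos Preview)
Your plan diverges sharply from the paper's and, as you yourself flag, leaves the second alternative unresolved; the paper bypasses that difficulty entirely with a much simpler device. There are no empty balls, no vertex sets, and no comparison of axes at nearby points. Instead, from any $x_1\in X$ the paper builds an \emph{off-axial chain}: if $n_{x_i}\ge 7$, let $x_{i+1}$ be a point of $X\setminus\ell_{x_i}$ nearest to $x_i$, and put $r_i^*:=|x_i,x_{i+1}|\le 2R$ (the cluster $C_{x_i}(2R)$ is full-dimensional, so off-axis points exist within $2R$). The $\sigma_{x_i}$-orbit of $x_{i+1}$ is a regular $n_{x_i}$-gon $P_i\subset X$ of circumradius at most $r_i^*$ and side $a_i\le 2r_i^*\sin(\pi/7)<0.87\,r_i^*$. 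Now the one-line key step: in $P_i$ the vertex $x_{i+1}$ and its two neighbours form a non-collinear triple, so whatever line $\ell_{x_{i+1}}$ is, at least one of those neighbours lies off it; hence $r_{i+1}^*\le a_i<0.87\,r_i^*$. The link lengths therefore decay geometrically, the chain must reach some $x_m\in X_6$ (otherwise $r_i^*\to 0$, violating the $2r$-separation), and the total displacement satisfies $|x_1,x_m|\le\sum_i r_i^*<2R/(1-0.87)<15.4\,R$. Taking $x_1$ nearest to an arbitrary $z\in\mathbb R^3$ yields $d(z,X_6)<16.4\,R$.

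Note how this dissolves exactly your ``main obstacle'': the iteration \emph{does} take on the order of $\log(R/r)$ steps, but the step \emph{lengths} form a convergent geometric series, so the total distance is $O(R)$ uniformly in $r$. Your worry conflated step count with displacement. The off-axial chain also sidesteps the rigidity issues you raise about nearby high-order axes: two local groups are never made to interact; one uses only that each $S_{x_i}(2R)$ manufactures a small regular polygon in $X$ around $x_{i+1}$, one of whose sides automatically witnesses a short off-axis distance at the \emph{next} centre. As a minor aside, your Case~1 is salvageable but not quite as written: a non-discrete group acting on a finite set need not bring points of that set close together. The clean finish is that the kernel of the action of $G=\langle\sigma_v:v\in V\rangle$ on $V$ fixes an affinely spanning set pointwise and is hence trivial, so $G$ embeds in the finite group $\mathrm{Sym}(V)$ --- contradicting the presence of two order-$\ge 7$ rotations about distinct axes.
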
 
At the moment, we do not care on the value $kR$ of the upper bound  for the parameter $R'$ of the $X_6$. For us,  so far it is more important to establish that the subset $X_6$ is
 always  a Delone subset.

From now on, we will focus on clusters $C_x(2R)$ of radius $2R$ and their groups $S_x(2R)$. As well-known, for a Delone set $X$,  the $2R$-clusters all are full-dimensional (i.e. the dimension of their convex hulls is $d$). Hence, the cluster  groups $S_x(2R)$ are  necessarily finite.    At the same time, we emphasize that the value of $2R$ is the smallest value of radius that guarantees the finiteness of the  cluster group of   radius $2R$ for any Delone set with parameter $R$. In other words, for an arbitrary $\varepsilon>0$ one can present   a Delone set $X$ with parameter $R$ such that for some $x\in X $  in $S_x(2R-\varepsilon)$ is infinite.

By virtue of the above, we will single out group $S_x(2R)$ and call it a \textit{local group at   $x$}.

\medskip

  Theorem~\ref{thm:dn6}   immediately  implies  Theorem~\ref{thm:stogrin} which concerns a Delone set $X$ with mutually equivalent $2R$-clusters.   Really, for such a Delone set $X$,  the local groups at  all points are pairwise conjugate and existence of   points $x$ with $n_x\leq 6$ implies the same inequality for all others.

  Now, among points of $X_6\subseteq X$ we select  points $x$  with the condition $n_x\neq 5$, i.e., all points of $X$ whose local groups contain axes of only 'crystallographic' orders 2,~3,~4, or 6. We call the  subset  of all such points in $X$ a \textit{crystalline kernel} of $X$ and denote by $K$.

  \begin{conjecture}  [Crystalline kernel conjecture]\label{conj:cryst}
   The  crystalline kernel $K$ of a Delone set $X$ is a Delone subset with some parameter $R'\leq kR$, where $k$ is some constant which  does not depend on $X$.   
  	
  \end{conjecture}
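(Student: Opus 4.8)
\emph{A possible route to Conjecture~\ref{conj:cryst}.}
The natural strategy mirrors the architecture of Theorem~\ref{thm:dn6}. Uniform discreteness of $K$ is immediate, since $K\subseteq X$, so one may take $r'=r$ and the entire content is the covering estimate $R'\le kR$. I would split that into a \emph{seed} statement — every Delone set $X\subset\mathbb R^3$ contains at least one point of its crystalline kernel, i.e.\ a point $x$ with $n_x\in\{1,2,3,4,6\}$ (in substance the weakest of the auxiliary conjectures) — and a \emph{bootstrap} that upgrades the seed to full relative density with a universal covering radius, run through the very compactness machinery that powers the passage from Theorem~\ref{thm:dn6-19} to Theorem~\ref{thm:dn6}.

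\emph{The bootstrap, granting the seed.}
Suppose $K$ were not relatively dense with covering radius bounded by a fixed multiple of $R$. Rescaling each set so that its parameter is $R=1$, one would then produce Delone sets $X^{(k)}$ of type $(r_k,1)$ and balls $B_{z_k}(\rho_k)$, $\rho_k\to\infty$, meeting $X^{(k)}$ only in \emph{non-crystallographic} points, i.e.\ points $x$ with $n_x=5$ or $n_x\ge 7$. Choosing $x_k\in X^{(k)}$ with $|x_k-z_k|\le 1$, translating $x_k$ to the origin, and passing to a Chabauty limit exactly as in the proof of Theorem~\ref{thm:dn6}, one gets a Delone set $X^{(\infty)}$ every point of which is non-crystallographic, contradicting the seed. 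The one new thing to verify is that non-crystallographicity survives the limit. Along the sequence $S_{x_k}(2)$ contains a rotation of some order $m_k\in\{5,7,8,\dots\}$; since the cluster $C_{x_k}(2)$ is full-dimensional it does not lie on the axis, so it carries a free orbit of size $m_k$ and hence $m_k$ is bounded by a packing bound on $|C_{x_k}(2)|$. On a subsequence the orders equal a constant $m$ and the axes converge, so the cluster group of the limiting point contains a rotation of order $m$: if $m\ge 7$ that point already has $n_x\ge 7$, and if $m=5$ then — because a finite subgroup of $O(3)$ containing a fivefold rotation has maximal rotation-axis order a positive multiple of $5$, in particular not in $\{1,2,3,4,6\}$ — the point is again non-crystallographic. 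The two technical nuisances, cluster points landing exactly on the sphere of radius $2$ in the limit and the possibility $r_k\to 0$ (where the clusters are no longer uniformly finite), are handled exactly as in the proof of Theorem~\ref{thm:dn6}.

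\emph{The obstacle: the seed.}
Here is where the whole difficulty sits, and this is precisely why the conjecture is open: the seed is a crystallographic-restriction statement lifted from $3$-dimensional lattices (which admit no fivefold and no $\ge 7$-fold axis) to arbitrary Delone sets, with the symmetry demanded only locally, on clusters of radius $2R$. I would attack it through the rigid local geometry that such a symmetry forces, using the empty-sphere method. If $x$ is icosahedral, an empty ball $B_c(R)$ with $x$ on its boundary satisfies $B_c(R)\subseteq B_x(2R)$ since $|x-c|=R$, so $S_x(2R)\supseteq I$ carries it to an orbit of at least $12$ empty $R$-balls clustered about $x$; these pin $X\cap B_x(2R)$ into a configuration in which the golden ratio enters, incommensurately with the integer combinatorics available to an $(r,R)$-set, and the plan would be to propagate this incommensurability to a non-crystallographic neighbour of $x$ (itself icosahedral, or carrying a $\ge 7$-fold axis — the latter case being the one already handled in Theorem~\ref{thm:dn6-19}) and extract a contradiction with relative density or uniform discreteness. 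Carrying this out rigorously is the content of the (still open) weaker conjectures, and is the step I expect to be the genuine obstacle; the bootstrap above shows it is also all that is missing.
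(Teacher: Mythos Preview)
The statement is Conjecture~\ref{conj:cryst}, which the paper leaves explicitly open; there is no proof in the paper to compare against, and you yourself correctly identify the ``seed'' as the unresolved step. What needs correcting is your description of the bootstrap. You repeatedly invoke ``the compactness machinery that powers the passage from Theorem~\ref{thm:dn6-19} to Theorem~\ref{thm:dn6}'' and say the boundary-sphere issue and the possibility $r_k\to 0$ are ``handled exactly as in the proof of Theorem~\ref{thm:dn6}''. None of that is in the paper. The proof of Theorem~\ref{thm:dn6} uses no Chabauty limits and no compactness at all: it is a direct, constructive argument. From an arbitrary point $x_1\in X$ one builds an \emph{off-axial chain} $[x_1,x_2,\ldots]$, and Lemma~\ref{lem:chain} shows that while $n_{x_i}\ge 7$ the link lengths contract geometrically with ratio $2\sin(\pi/7)<0.87$, so the chain must hit $X_6$ within distance $<15.4R$ of $x_1$. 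The technical nuisances you defer to the paper simply never arise there, so your bootstrap, whatever its merits, is a new argument you would have to carry out in full, not a reuse of existing machinery.

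The paper's actual method also makes transparent \emph{why} the crystalline kernel is harder than $X_6$, and your proposal does not engage with this. The chain contracts precisely because $2\sin(\pi/n)<1$ for $n\ge 7$; for $n=5$ one has $2\sin(\pi/5)\approx 1.176>1$, so a local $5$-fold axis yields no contraction and the off-axial chain argument stalls. That is the structural obstruction separating Theorem~\ref{thm:dn6} from Conjecture~\ref{conj:cryst}. Repackaging the difficulty as a seed statement plus a limit argument is a legitimate strategy, but as written the limit half is unsupported by the reference you give, and the seed half --- as you acknowledge --- is essentially the open content of Conjectures~\ref{conj:pentag} and~\ref{conj:weak-pentag}.
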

  
Let   $Y$ denote  the subset of all points $x\in X$ at which  local groups $S_x(2R)$ do not contain  the pentagonal axis. It is clear that   $K\subseteq Y$ and if $K$ is a Delone set then $Y$ is a Delone set too. Therefore   
Conjecture~\ref{conj:cryst}, if proven,  immediately implies the following two 
Conjectures 2 and 3.

\begin{conjecture}[5-gonal symmetry conjecture]\label{conj:pentag}
Given a Delone set $X\subset \mathbb R^3$, the subset $Y$ of points $x$, whose  groups $S_x(2R)$ are free of 5-fold axes,  is also a Delone set.
	\end{conjecture}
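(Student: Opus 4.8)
The plan is to prove the two Delone conditions for $Y$ separately, exactly as in Theorem~\ref{thm:dn6}, the lower bound being immediate and all the work going into the upper bound. Since $Y\subseteq X$ and $X$ is $2r$-separated, the packing condition holds for $Y$ with the same $r'=r$; nothing is to be shown there. The entire content is the covering condition: to exhibit a constant $k$, independent of $X$, such that every closed ball of radius $kR$ meets $Y$. Equivalently, I would rule out arbitrarily large \emph{$Y$-holes}, that is, balls $B=B_c(\rho)$ with $\rho$ a large multiple of $R$ such that every $x\in X\cap B$ has a $5$-fold axis in its local group $S_x(2R)$. The framework is the locality/compactness machinery behind Theorem~\ref{thm:dn6}, with the forbidden feature changed from ``an axis of order $\ge 7$'' to ``an axis of order exactly $5$''; the one genuinely new ingredient, replacing the crowding estimate of the dn6 argument, is an \emph{arithmetic} obstruction coming from the regular pentagon.

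Assume toward a contradiction that a $Y$-hole $B$ of radius $\rho\gg R$ exists. For each $x\in X\cap B$ choose a $5$-fold axis $\ell_x$ of $S_x(2R)$; its action on $C_x(2R)$ produces regular-pentagon orbits inside $B_x(2R)$. Because the points of any $2R$-cluster are $2r$-separated and confined to $B_x(2R)$, their number is bounded by a constant $N=N(r,R)$ and the family of all $2R$-clusters that can occur is precompact; after rescaling to $r=1$ this family depends only on the ratio $R/r$. The decisive structural step is \emph{coherence}: every vertex of the pentagon generated at $x$ is again a point of $X$ within $2R$ of $x$, so for $x$ in the inner part of $B$ that vertex again lies in $B$ and hence carries its own $5$-fold axis. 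I would use this to force the pentagons centered at neighbouring points (those within $2R$ of each other) to share vertices, and thereby to show that the inter-point vectors realized across $B$ all lie in a finitely generated module over $\mathbb Z[\varphi]$, where $\varphi=(1+\sqrt5)/2$ is the golden ratio dictated by the diagonal-to-edge ratio of the shared pentagons.

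The contradiction is then the crystallographic restriction in local, module-theoretic form. The number $2\cos(2\pi/5)=\varphi-1$ is a quadratic irrational, so a $\mathbb Z[\varphi]$-module of inter-point vectors generated by finitely many pentagon edges cannot stay uniformly $2r$-separated once it is propagated coherently across a ball of radius $\gg R$: the coherence relations force realizable distances to accumulate and produce two points of $X$ nearer than $2r$, contradicting Condition~(1). This is where the genuinely new work sits, and it is the anticipated main obstacle. Unlike the situation of Theorem~\ref{thm:stogrin}, a general Delone set has no cluster equivalence, so the axes $\ell_x$ need not be mutually parallel a priori, and the module argument applies only after one knows that the local orientation field takes finitely many values. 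The crux is therefore a \emph{rigidity lemma}: once two $5$-fold pentagons of bounded shape share two points, their axes and orientations are determined up to the finite ambiguity of the icosahedral group $I_h$. Establishing this---and in particular handling the icosahedral case, in which several distinct $5$-fold axes meet at a single point, rather than only the cyclic and dihedral groups $C_5,D_5$---is the delicate part, and is presumably the reason the statement is posed here as a conjecture rather than proved.

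Finally, the constant $k$ would be extracted from the compactness bookkeeping: the number of admissible pentagon shapes, the order of the icosahedral ambiguity, and the denominators occurring in the $\mathbb Z[\varphi]$-relations are all controlled by $N(r,R)$ and hence, after the normalization $r=1$, by the single dimensionless ratio $R/r$. This yields a bound on the radius of any $Y$-hole that is an absolute multiple of $R$, giving $R'=kR$ with $k$ independent of $X$, as required.
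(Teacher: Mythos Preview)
The statement you are attempting to prove is explicitly presented in the paper as an \emph{open conjecture}; the paper gives no proof of it, so there is nothing to compare your proposal to on that level. What matters, then, is whether your outline actually closes the gap. It does not, and the central obstruction is one you have inverted rather than overcome.

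Your decisive step is the claim that once the inter-point vectors across the putative $Y$-hole are trapped in a finitely generated $\mathbb Z[\varphi]$-module, the irrationality of $\varphi$ forces realizable distances to accumulate below $2r$. This is false as stated: Penrose vertex sets and the icosahedral Shechtman structures are Delone sets whose inter-point vectors lie in precisely such modules, and the paper itself remarks (Section~4) that in these structures the points with local $5$-fold symmetry already form a rich Delone subset. The crystallographic restriction you invoke rules out $5$-fold symmetry in \emph{lattices}, i.e.\ in rank-$3$ $\mathbb Z$-modules; it says nothing against a rank-$6$ $\mathbb Z$-module (equivalently a rank-$3$ $\mathbb Z[\varphi]$-module) projecting to a uniformly discrete set in $\mathbb R^3$. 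So the ``arithmetic obstruction'' you advertise as the new ingredient is exactly the step that fails, and it fails for the very examples the conjecture is meant to cover.

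Two further points. First, your coherence claim---that pentagons at neighbouring centres must \emph{share} vertices---is asserted, not argued; a pentagon at $x$ is the orbit of some off-axis point under the rotation about $\ell_x$, and there is no mechanism forcing that orbit to meet the orbit produced at a neighbouring $x'$ about a possibly unrelated axis $\ell_{x'}$. Second, even granting everything else, your bookkeeping yields a bound governed by $N(r,R)$, hence by the ratio $R/r$; that is a quantity depending on $X$, so the final sentence claiming $k$ independent of $X$ does not follow from what precedes it. You acknowledge in passing that the rigidity lemma is ``presumably the reason the statement is posed here as a conjecture''; that concession is accurate, and the module argument does not rescue it.
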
  

 In its turn, Conjecture~2 enforces the following  statement that  seems to be much easier proved.

\begin{conjecture}[Weak 5-gonal symmetry conjecture]\label{conj:weak-pentag}
	Given a Delone set $X\subset \mathbb R^3$ with mutually  equivalent $2R$-clusters,  the local group $S_x(2R)$ contains no 5-fold axis.  
\end{conjecture}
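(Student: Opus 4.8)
\medskip
\noindent The plan is to reduce to a single candidate local group and, in each surviving case, to exhibit two points of $X$ at distance strictly below $2r$, contradicting the choice of $r$; this is the Delone-set analogue of the classical crystallographic restriction. Since all $2R$-clusters of $X$ are equivalent, all local groups $S_x(2R)$ are conjugate to one fixed finite group $P\le O(3)$, which by Theorem~\ref{thm:stogrin} has no rotation axis of order exceeding $6$. Suppose $P$ contains a $5$-fold axis. If $P$ had two distinct $5$-fold axes, then by the classification of finite subgroups of $SO(3)$ it would be conjugate to the icosahedral group $I$ or to $I_h$; otherwise $P$ has a unique $5$-fold axis, i.e.\ $P\in\{C_5,C_{5v},C_{5h},S_{10},D_5,D_{5h},D_{5d}\}$. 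Equivalence of the clusters also forces the nearest-neighbour distance to be a global constant, hence equal to $2r$, so for each $x\in X$ the neighbour shell $\Sigma_x:=\{y\in X:|x,y|=2r\}$ is a nonempty $S_x(2R)$-invariant finite subset of the sphere of radius $2r$ about $x$.

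\emph{Icosahedral case.} Here $\Sigma_x$ is a union of $I$- (or $I_h$-) orbits on the sphere of radius $2r$. Every such orbit of size $20$, $30$, $60$ or $120$ contains two points at distance $<2r$ --- already a dodecahedron inscribed in that sphere has edge length $\approx1.43\,r$ --- so $\Sigma_x$ can only be the unique $12$-point orbit: the vertices of a regular icosahedron of circumradius $2r$ placed along the six $5$-fold axes of $S_x(2R)$. Fix $x_0\in X$ and a neighbour $y_0=x_0+2r\mathbf n\in\Sigma_{x_0}$ along a $5$-fold axis $\ell_0$ of $S_{x_0}(2R)$. Since $|x_0,y_0|=2r$ we also have $x_0\in\Sigma_{y_0}$, which is again such an icosahedron; hence $\mathbf n$ is a $5$-fold axis direction of $S_{y_0}(2R)$, and $\ell_0$ is a common $5$-fold axis through $x_0$ and $y_0$. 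The pentagon of $\Sigma_{x_0}$ on the $y_0$-side of $x_0$ lies on the cylinder of radius $\tfrac{2}{\sqrt5}(2r)$ about $\ell_0$ at height $\tfrac{1}{\sqrt5}(2r)$ above $x_0$, while the pentagon of $\Sigma_{y_0}$ on the $x_0$-side of $y_0$ lies on the same cylinder at height $2r\!\left(1-\tfrac{1}{\sqrt5}\right)$ above $x_0$. These ten points all belong to $X$, the two pentagons are coaxial and of equal radius, and their heights differ by only $2r\!\left(1-\tfrac{2}{\sqrt5}\right)\approx0.21\,r$; hence, whatever their relative azimuth, some vertex of one is within distance $\le1.13\,r<2r$ of some vertex of the other --- a contradiction. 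This disposes of the icosahedral case.

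\emph{Axial case.} Now each $x\in X$ carries a unique $5$-fold axis $\ell_x$ through $x$. The plan has two parts: (a) show that all the $\ell_x$ are parallel, say to a common direction $\mathbf n$, by propagating the axis direction along chains of points of $X$ with consecutive gaps $<R$; (b) take two points $x,x'\in X$ with $|x,x'|\le 2R$ whose $5$-fold axes are distinct, parallel and as close as possible, put $t:=\operatorname{dist}(\ell_x,\ell_{x'})$, and form $\psi:=\varphi_{x'}^{-1}\!\circ\varphi_x$, where $\varphi_x,\varphi_{x'}$ are the rotations by $72^\circ$ about $\ell_x,\ell_{x'}$. On $C_x(2R)\cap C_{x'}(2R)$ the map $\psi$ sends points of $X$ to points of $X$, and $\psi$ is a nonzero translation in the plane $\mathbf n^{\perp}$ of length $2\sin36^\circ\cdot t$; so if $t<r/\sin36^\circ\approx1.70\,r$ then $x$ and $\psi(x)$ are two distinct points of $X$ at distance $<2r$ --- a contradiction. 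Consequently no two $5$-fold axes of $X$ can lie closer than about $1.70\,r$, and $X$ would have to be organized into ``pentagonal columns'' whose axes are mutually at least $1.70\,r$ apart.

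The final step --- ruling out such a columnar configuration given the Delone parameters $(r,R)$ --- is the part I expect to be the hardest, and it carries the genuine crystallographic-restriction content of the conjecture. Two difficulties stand out. First, (a) is delicate: the direction of $\ell_x$ need not be determined by any proper subcluster of $C_x(2R)$, so the naive ``continuity'' of the axis direction has to be replaced by a genuine propagation that exploits the equivalence of $2R$-clusters, not merely the finiteness of the local groups. Second, even granting (a), the one-step composition in (b) closes only when $x$ has a point of $X$ off $\ell_x$ that is within distance $2R$ of $x$ and within horizontal distance $1.70\,r$ of $\ell_x$; in the complementary range one must iterate --- following a chain of nearest off-axis neighbours and composing several $5$-fold rotations (and, in $D_5,D_{5h},D_{5d}$, the transverse $2$-fold axes, in $C_{5v},C_{5h}$ the mirror planes) --- to push some pair of parallel axes below the threshold $1.70\,r$. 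Making this iteration terminate, i.e.\ excluding an exactly critical packing of pentagonal columns, is the crux.
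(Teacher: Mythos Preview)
There is no proof in the paper to compare against: the statement is labelled \emph{Conjecture}~\ref{conj:weak-pentag}, and the text explicitly says ``it is still unknown whether there are regular systems with 5-fold symmetrical $2R$-clusters.'' The paper only observes that Conjecture~\ref{conj:pentag} would imply Conjecture~\ref{conj:weak-pentag}; it does not attempt a direct argument. So you are not reproving a known result --- you are attacking an open problem.

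As to the content of your attempt: the icosahedral case is a genuine and, as far as I can see, correct argument. Equivalence of $2R$-clusters makes the minimum distance $2r$ attained, the orbit analysis forcing $\Sigma_x$ to be a regular icosahedron of circumradius $2r$ is sound (the icosahedral edge is $\approx 2.10r$, so the $12$-orbit is admissible while the $20$-, $30$-, $60$-, $120$-orbits are not), and the two-pentagon computation along the shared $5$-fold axis through $x_0,y_0$ does yield two distinct points of $X$ at distance at most about $1.13r<2r$. That half is publishable on its own as a lemma.

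The axial case, however, is not a proof, and you say so yourself. Two real gaps remain. For (a), nothing in the hypotheses forces the unique $5$-fold axes $\ell_x$ to be parallel; equivalence of $2R$-clusters gives a congruence $C_x(2R)\to C_{x'}(2R)$ taking $x\mapsto x'$, but that congruence can rotate the axis direction, and you have no mechanism to rule this out. Without parallelism the composition $\varphi_{x'}^{-1}\circ\varphi_x$ is a nontrivial rotation, not a translation, and the displacement bound $2t\sin36^\circ$ collapses. For (b), even granting parallelism, your argument only bites when some pair of distinct axes lies closer than $r/\sin36^\circ$; you have not excluded a ``columnar'' configuration with all axis-to-axis distances above that threshold, and your closing paragraph concedes exactly this. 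Until both (a) and the termination of the iteration in (b) are established, what you have is a promising strategy, not a proof --- which is consistent with the paper's view that the question is open.
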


It is obvious that these hypotheses relate to a celebrated  crystallographic theorem on the impossibility  of the global 5-fold symmetry in a three-dimensional lattice. Conjectures~1--3  significantly reinforce a classical statement  on famous crystallographic restrictions.   

It is well-known that for a  3-dimensional lattice even in  the group $S_x(r_1)$,   there are no the 5-fold symmetry, where $r_1=2r\leq 2R$ is the minimum inter-point distance in the lattice. In contrast to  lattices, in regular systems in 3$D$ space, the pentagonal symmetry can \textit{locally} manifest itself on clusters of a certain radius less than $2R$. 
So, for  instance,   there are regular systems such that even  group  $S_x(r_3)$   contains the 5-fold axis, but the local group $S_x(2R)$ does not. Here, in the systems,  $r_3~ (r_1<r_2<r_3<2R$) is the 3rd inter-point distance in $X$.
    But, it is still unknown 
whether there are regular systems with 5-fold symmetrical $2R$-clusters.

Since   the regularity radius for dimension 3 is not less than $6R$ (see \cite{LowerBound}),   among Delone sets $X$ with mutually equivalent $2R$-clusters, there are  non-regular and even   non-crystallographic sets. Thus,  even the weakest   Conjecture~3 concerns also a wide class of those non-regular sets.


\section{Proof of Theorem~\ref{thm:dn6}}

\begin{proof} Generally speaking, in the local group $S_x(2R)\subset O(3)$, $ x\in X$, there are several axes of maximal order $n_x$. Bearing in mind the well-known list of all finite subgroups of $O(3)$, we see that more than one axes of the maximal order $n_x$ in $S_x(2R)$ cannot happen  provided $n_x>5$. Let $\ell_x$ be one of those axes. Since  rk$(C_ x(2R))=3$, i.e. the convex hull of the $2R$-cluster is 3-dimensional, in  $C_x (2R)$, there are necessarily  points off the $\ell_x$.

Since $ X_6$ is a   subset of $X$, the minimal  inter-point distance $2\tilde r$ in $X_6$ (in fact, the infimum of such distances) is not less than  $ 2r=\inf_{x,x'\in X}|x,x'|$. In order to prove that $X_6$ is a Delone set with a certain parameter  $\tilde R$,  we will prove that the distance from a given  point  $z\in \mathbb R^3$   to the nearest point of $X_6$ does not exceed $\tilde R$: $\min_{x\in X'}|z,x|\leq \tilde R$ (due to interpretations (1) for the parameters $r$ and $R$ in Section 1. 
We will be looking for the point  $x\in  X_6$ nearest  to $z\in \mathbb R^3$ by  walking  along a special finite point sequence in $X_6$.

\begin{definition}
	
A sequence of points $[x_1,x_2, \ldots x_m, \ldots] \in X$  (finite or infinite, no matter`) is termed an \textit{off-axial chain} if the following condition holds for any $i=1,2,\ldots$:
  \newline the point $x_{i+1}\in X$ is the nearest point to  $x_i$ among all points of $X$ which are off the axis $\ell_{x_i}$, where  the axis $\ell_{x_i}$ means an axis of the local group $S_{x_i}(2R)$ of the maximal order $n_{x_i}$.   In case the subgroup of all (orientation-preserving) rotations of  $S_{x_i}(2R)$ is trivial (that is, in the local group at $x_i$ no  axes through $x_1$),  any nearest  to $x_i$ point of $X$	 can be chosen as $x_{i+1}$.  
  	\end{definition}
  	
  	Note  that for any point $x\in X$,  there are   off-axial sequences $[x_1(=x),x_2,x_3,\ldots ]$. 
  	
  	\begin{lemma}\label{lem:chain}
  		Given a Delone set $X$ and an off-axial sequence $[x_1,x_2,\ldots , x_m,\ldots ]\subset X$,  assume that  $x_i\notin X_6$, $\forall\, i\in \overline{1,m}$. Then for $i\in \overline{1,m}$ the following holds: 
  		\[|x_i, x_{i+1}|<0.87^{i-1}\cdot 2R \quad  {\rm and} \quad |x_1,x_m|<7.7\cdot 2R=15.4\,R, {\rm \, for\,  all\, }\, m. \eqno(3)  \] 
  	\end{lemma}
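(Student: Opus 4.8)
The plan is to prove the first estimate of~(3), $|x_i,x_{i+1}|<0.87^{i-1}\cdot 2R$, by induction on $i$ (the case $i=1$ coming out as $\le 2R$, which is all that is needed); the second estimate then follows at once, since by the triangle inequality and the geometric series $|x_1,x_m|\le\sum_{i=1}^{m-1}|x_i,x_{i+1}|\le 2R\sum_{i\ge 0}0.87^{i}=2R/0.13<7.7\cdot 2R=15.4\,R$ for every $m$.

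For the base case $i=1$: since $x_1\notin X_6$ we have $n_{x_1}\ge 7$, and (recall that, as $n_{x_1}>5$, the group $S_{x_1}(2R)$ has a unique axis of maximal order) the axis $\ell_{x_1}$ is unique and passes through $x_1$. Since the convex hull of $C_{x_1}(2R)$ is $3$-dimensional, the cluster is not contained in the line $\ell_{x_1}$, so $X\cap B_{x_1}(2R)$ contains a point off $\ell_{x_1}$; hence the first off-axial step satisfies $|x_1,x_2|\le 2R=0.87^{0}\cdot 2R$.

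For the inductive step, assume $d_i:=|x_i,x_{i+1}|\le 0.87^{i-1}\cdot 2R$ (so in particular $d_i\le 2R$), with $i+1\le m$, so that $x_i,x_{i+1}\notin X_6$ by hypothesis and each has a unique maximal-order axis, $\ell_{x_i}\ni x_i$ of order $n_i\ge 7$ and $\ell_{x_{i+1}}\ni x_{i+1}$ of order $n_{i+1}\ge 7$. Let $\psi\in S_{x_i}(2R)$ be the rotation about $\ell_{x_i}$ through $2\pi/n_i$. By the definition of an off-axial chain, $x_{i+1}$ is off $\ell_{x_i}$, hence is not fixed by any nontrivial power of $\psi$, so the $\psi$-orbit of $x_{i+1}$ is a regular $n_i$-gon with $n_i$ distinct vertices of circumradius $\sigma_i:=d(x_{i+1},\ell_{x_i})$, where $0<\sigma_i\le d_i$ (because $x_i\in\ell_{x_i}$); moreover $d_i\le 2R$ gives $x_{i+1}\in C_{x_i}(2R)$, so the entire orbit lies in $C_{x_i}(2R)\subset X$. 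The two neighbours $z_{\pm}:=\psi^{\pm 1}(x_{i+1})$ of $x_{i+1}$ in this orbit satisfy $|x_{i+1},z_{\pm}|=2\sigma_i\sin(\pi/n_i)\le 2d_i\sin(\pi/7)<0.87\,d_i$. Now $z_-,x_{i+1},z_+$ are three distinct consecutive vertices of a regular $n_i$-gon with $n_i\ge 3$, hence are not collinear, while $x_{i+1}\in\ell_{x_{i+1}}$; therefore at least one of $z_+,z_-$, say $z$, does not lie on the line $\ell_{x_{i+1}}$. Since $z\in X$ lies off $\ell_{x_{i+1}}$, the defining property of the chain yields $|x_{i+1},x_{i+2}|\le|x_{i+1},z|<0.87\,d_i\le 0.87^{i}\cdot 2R$, which closes the induction.

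The inductive step is the heart of the argument, and it is also the only place the defining restriction $n_x\le 6$ of $X_6$ enters: one needs $n_i\ge 7$ (that is, $x_i\notin X_6$) precisely to make the chord factor $2\sin(\pi/n_i)$ strictly less than $1$, and one needs $n_i\ge 3$, together with the uniqueness of the principal axis valid for $n_x\ge 7$, in order to invoke the elementary fact that a straight line (here $\ell_{x_{i+1}}$) cannot carry three consecutive vertices of a regular $n_i$-gon --- which is what sends one of the two nearest orbit-neighbours of $x_{i+1}$ off the axis $\ell_{x_{i+1}}$. I expect everything else to be routine: uniqueness of the maximal axis for $n_x\ge 7$, the non-degeneracy of the orbit polygon (because $x_{i+1}\notin\ell_{x_i}$), and the harmless fact that the base case yields $|x_1,x_2|\le 2R$ rather than a strict inequality --- which costs nothing in~(3), as $2R/0.13<7.7\cdot 2R$ leaves room to spare.
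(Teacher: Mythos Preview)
Your proof is correct and follows essentially the same route as the paper: bound $|x_i,x_{i+1}|$ inductively via the regular $n_{x_i}$-gon obtained by rotating $x_{i+1}$ about $\ell_{x_i}$, then use that three consecutive vertices of this polygon are non-collinear to find a neighbour of $x_{i+1}$ off $\ell_{x_{i+1}}$, giving the factor $2\sin(\pi/7)<0.87$ at each step; the second bound in~(3) is then just the geometric series. Your write-up is in fact cleaner about indices than the paper's, and you correctly flag (and correctly dismiss) the cosmetic $\le$ versus $<$ issue at $i=1$.
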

  	   \begin{proof}(of Lemma~\ref{lem:chain}).  
	Let $[x_1,x_2,,\ldots , x_m,\ldots ]$ be an off-axial chain and assume that it belongs $X\setminus X_6$.  
	Recall that, by construction, in this chain,  the length $r^*_i$ of each link $x_i,x_{i+1}$ is less than $2R$. Hence $x_{i+1}\in C_{x_i}(2R)$. Therefore, the rotation $g_{x_i}\in S_{x_i}(2R)$  can be applied to point $x_{i+1}$ too.

 By assumption, $x_1\notin X_6$, that is,   $n_{x_1}\geq 7$. Let $x_2$ be the nearest to $x_1$  point $x_2$ which is off the axis $l_{x_1}$, also let   $g_{x_1}$ be   
    a rotation  around axis $\ell_{x_1}$ by angle $2\pi/n_{x_1}$. Since 
    $r_1^*\leq 2R$ the   cluster  $C_{x_1}(2R)$ necessarily contains vertices of a regular $n_{x_1}$-gon $P_1$   which is generated by  the rotation $g_{x_1}\in S_{x_1}(2R)$ applied to the point $x_2$. The polygon $P_1$ is  located in a plane orthogonal to the $\ell_{x_1}$. The center of $P_1$ is  on $l_{x_1}$ (Figure~\ref{fig:distance}). 

    Denote the side-length of $P_1$ by $a_1$. Since the circumradius of $P_1$ does not exceed $r_{x_1}^*$ and  $n_{x_1}\geq 7$ we have  for $a_1$ and  $r_{x_2}^*$  the following estimate
\[r_{x_2}^*\leq a_1\leq 2r_{x_1}^*~\sin \frac{\pi }{\,n_{ x_1}}\leq 2r_{x_1}^*~\sin \frac{\pi}{7}<0.87~r_{x_1}^*< 0.87\cdot 2R. \eqno (4)\]

   
   

\smallbreak

\begin{center}
\begin{figure}[!ht]	
	\includegraphics[width=4.5cm]{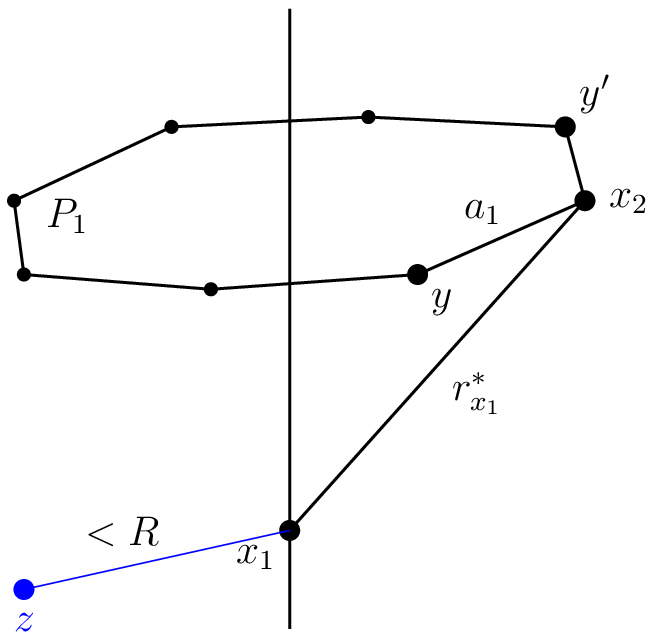}
	\end{figure}
\end{center}

\captionof{figure}{Polygon $P_1$ and the beginning of an  off-axial chain   $[x_1, x_2,\ldots ]$ \label{fig:distance}}


  \medskip
   Assuming now  that   $x_2\notin X_6$,  i.e. $n_{x_2}\geq 7$, we  will construct  the next point  $x_3$ in  the  off-axial chain $[x_1,x_2,\ldots]$ and obtain upper estimates  for $r_{x_3}$ and $a_2$ (see inequalities (5) below).
    
The  rotation $g_{x_2}$ about  the axis $\ell_{x_2}$ is assumed to belong to the local group  $S_{x_2}(2R)$.  Since $x_3\in C_{x_2}(2R)$, the $g_{x_2}$   can be applied to the point $x_3$. Hence  
the  cluster  $C_{ x_2}(2R)$ necessarily contains vertices of a regular $n_{x_2}$-gon $P_2$ generated by  rotation $g_{x_2}$ applied to the point $x_3$.
 Denote the side-length of $P_2$ by $a_2$ and  note that $a_2\leq r_2^*\leq 2R$.
 
Point  $x_3$, like  a vertex of the  regular $n_{x_2}$-gon $ P_2$,   has two adjacent vertices in $P_2$ at distance $a_2$ from vertex $x_3$.   Vertex $x_3$ and two   adjacent vertices of $P_2$ form a non-collinear triple. Therefore, no matter how the axis $\ell_{x_2}$ passes through the point $x_2$,  anyway,  at least one  of two neighboring points  is off   the axis $\ell_{x_2}$. It follows that the distance $r_{x_3}^*$ from $x_3$ to the nearest point $x_4\in X\setminus \ell_{x_3}$ does not exceed  $a_2$.  Since  we bear in mind that in the set $X\setminus \ell_{x_2}$, there may be points nearer to the $x_3$ than distance $a_1$ we have
$r_{x_2}^*\leq a_2\ $.   

 Since $n_{x_2}\geq 7$, by the same argument as above,  we have 
\[r_{x_3}^*<a_2\leq 2r_{x_2}^*~\sin \frac{\pi }{n_{ x_2}}\leq 2r_{x_2}^*~\sin \frac{\pi}{7}<0.87~r_{x_1}^*< 0.87^2\cdot 2R. \eqno (5) \]

This reasoning can be repeated over and over again. Under condition  $n_{x_i}\geq 7$, $\forall i\in \overline{ 1,m}$,  we get the off-axial chain $[x_1,x_2,\ldots ]$  for which the sequence of inter-point distances  $r_{x_i}^*=   |x_ix_{i+1}|$ is dominated by   a geometric progression

\[ r_{x_{i+1}}^*<0.87~r_{x_i}^*<(0.87)^i~r_{x_1}^*<(0.87)^i~2R. \eqno(6)\]
  
  Thus, we obtain the required inequalities (3).  Lemma~\ref{lem:chain} is proved. 
  \end{proof}	 

\bigskip

Now we are going to complete the proof of Theorem~\ref{thm:dn6}. Given a Delone set $X$, from Lemma~\ref{lem:chain} it follows that any off-axial chain provided $n_{x_i}\geq 7$ is finite. Moreover its length $m$ can be bounded from above: $m<M:= \log (\frac{R}{r}) /\log \frac{1}{0.87}.   $   
This implies that in the chain $[x_1,x_2,\ldots ]$  there are points $x_m$. with $n_{x_m}\leq 6$, i.e. $x_m\in X_6$.  By Lemma~\ref{lem:chain} the segment-length $|x_1, x, x_m|<15.4$. 

Now  we set up    upper boundedness of the distance from an arbitrary point $z$  of space to the nearest point of the subset $X_6$. Let  the nearest to $z$ point of $X$ be $x_1$ (see Figure~\ref{fig:distance}) and $x_m\in X_6$ (by Lemma~\ref{lem:chain}). Then $|z,x_1|\leq R$ and we get   
\[\min_{x\in X_6} |z,x|\leq   |z, x_m|\leq |z,x_1|+|x_1,x_m|=16.4\,R. \eqno(7) \] 

In other words, we proved that the subset $X_6$ is a Delone set with some parameter $\tilde R\leq 15.4\,R.$  
Theorem~\ref{thm:dn6} is proved. 
\end{proof}

\section{Concluding remarks}
We emphasize that the upper bound   for the parameter $\tilde R$,  established here, is far from optimal one. We believe  that we will soon be able  to present a  sharper bound \cite{DolSto2020} . The purpose of this  paper was to present the`  result, in our opinion, of a new type.  The result   suggests a few conjectures which should be interesting both in itself and  in  context of the theory of quasicrystals. So, for instance,  in Penrose patterns, in structures of real Shechtman quasicrystals, the centers of $2R$-clusters with   local 5-fold symmetry constitute a rich Delone subset. At the same time, in these known quasicrystalline structures, there are also Delone subsets of points with local crystallographic axes (including identical)    `
 However, according to  Conjecture~1 , not only in in these structures  but  in any other possible Delone sets, points with local crystallographic axes \textbf{inevitably} constitute an essential part of the structure.

\section{Acknowledgments}
This work  was supported by the Russian Science Foundation under grant 20-11-20141 and performed in Steklov Mathematical Institute of Russian Academy of Sciences.

\end{document}